\newtheorem{thm}{Theorem}[section]
\newtheorem{lem}[thm]{Lemma}
\newtheorem{qus}[thm]{Question}
\theoremstyle{definition}
\theoremstyle{remark}
\numberwithin{equation}{section}
\begin{document}
\title[]{Non-solvable groups generated by involutions in which every involution is left $2$-Engel}
\author{Alireza Abdollahi}%
\address{Department of Mathematics, University of Isfahan, Isfahan 81746-73441, Iran; and School of Mathematics, Institute for Research in Fundamental Sciences (IPM), P.O. Box 19395-5746, Tehran, Iran}%
\email{a.abdollahi@math.ui.ac.ir}%

\thanks{}%
\subjclass[2000]{20D15;20F12}%
\keywords{Groups of exponent 4; Involution; Left $2$-Engel; Abelian normal closure }%

\begin{abstract}
The following problem is proposed as Problem 18.57 in [The Kourovka Notebook, No. 18, 2014] by D. V. Lytkina:\\
Let $G$ be a finite $2$-group generated by involutions in which $[x, u, u] = 1$ for
every $x \in G$ and every involution $u \in G$. Is the derived length of $G$ bounded?\\
The question is asked of an upper bound on the solvability length of finite $2$-groups
generated by involutions in which every involution (not only the generators) is also left $2$-Engel. We negatively answer the question.
\end{abstract}
\maketitle
\section{\bf Introduction and Result}
The following problem is proposed as Problem 18.57 of \cite{Kour} by D. V. Lytkina:
\begin{qus}\label{qus} Let $G$ be a finite $2$-group generated by involutions in which $[x, u, u] = 1$ for
every $x \in G$ and every involution $u \in G$. Is the derived length of $G$ bounded?
\end{qus}
Question \ref{qus} is asked of an upper bound on the solvability length of finite $2$-groups
generated by involutions in which all involutions of groups (not only the generators) are also left $2$-Engel elements. We negatively answer the question. In the proof we need some well-known facts about the groups of exponent $4$.\\
It is know that groups of exponent $4$ are locally finite \cite{S} and the free Burnside group $\mathfrak{B}$ of exponent $4$ with infinite countable rank is not solvable \cite{R}. In \cite{GW}, it is proved that the solvability of $\mathfrak{B}$ is equivalent to the one of the group $H$ defined as follows:\\
Let $H$ be the freest group generated by elements $\{x_i \;|\; i\in \mathbb{N}\}$ with respect to the following relations:\\
(1) \; $x_i^2=1$ for all $i\in\mathbb{N}$; \\
(2) \; The normal closure $\langle x_i \rangle^H$ is abelian for all $i\in \mathbb{N}$.\\
(3) \; $h^4=1$ for all $h\in H$; \\
Therefore $H$ is a non-solvable group of exponent $4$ generated by involutions $x_i$ ($i\in \mathbb{N})$. Note that the relation (2) above is equivalent to  say that $x_i$ is a left 2-Engel element of $H$ that is  $[x,x_i,x_i]=1$ for all $x\in H$. We do not know if $H$ has the property requested in Question \ref{qus}, that is, whether every involution $u\in H$ is a left $2$-Engel element of $H$. Instead we find a quotient of $H$ which is still non-solvable but it satisfies the latter property. The latter quotient of $H$ will provide a counterexample for Question \ref{qus}. To introduce the quotient we need to recall some definitions and results on right $2$-Engel elements. \\
For any group $G$, $R_2(G)$ denotes the set of all right $2$-Engel elements of $G$, i.e.
$$R_2(G)=\{a\in G \;|\; [a,x,x]=1 \;\text{for all}\; x\in G\}.$$
It is known  \cite{K} that $R_2(G)$ is a characteristic subgroup of $G$. The subgroup $R_2(G)$ is a $2$-Engel group that is $[x,y,y]=1$ for all $x,y\in R_2(G)$. Thus $R_2(G)$ is nilpotent of class at most $3$ \cite{H} and so it is  of solvable length at most $2$.
\begin{thm}\label{thm}
Let $H$ be the freest group defined above.  Then $\overline{H}=H/R_2(H)$ satisfies the following condition:\\
{\rm (4)} \; all involutions $u\in\overline{H}$ are left $2$-Engel in $\overline{H}$.\\
 Furthermore, $\overline{H}$ is not solvable so that there is no upper bound on  solvability lengths of finite $2$-groups  $\overline{H}_n=\frac{\langle x_1,\dots, x_n\rangle}{R_2(\langle x_1,\dots, x_n\rangle)}$ which satisfy all conditions {\rm (1), (2), (3)} and {\rm (4)} above.
\end{thm}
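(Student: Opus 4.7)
The plan is to handle the two assertions separately, dispatching the non-solvability first. Since $R_2(H)$ consists of right $2$-Engel elements it is itself a $2$-Engel group, hence by Heineken's theorem nilpotent of class at most three and solvable of derived length at most two. Therefore, if $\overline{H} = H/R_2(H)$ were solvable of derived length $d$, then $H^{(d)} \subseteq R_2(H)$ would give $H^{(d+2)} = 1$, contradicting the non-solvability of $H$ recorded in \cite{GW} and \cite{R}. The statement about the finite groups $\overline{H}_n$ then follows from the local finiteness of exponent-$4$ groups \cite{S}.

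For condition (4), fix an involution $\bar{v} \in \overline{H}$ and a lift $v \in H$, so that $v^2 \in R_2(H)$. The goal is $[y,v,v] \in R_2(H)$ for every $y \in H$. Applying the standard identities $[a,bc]=[a,c][a,b]^c$ and $c^v = c\cdot [c,v]$ yields
\[
[y, v^2] = [y, v]^2 \cdot [y, v, v],
\]
so, because $v^2 \in R_2(H) \trianglelefteq H$ and hence $[y, v^2] \in R_2(H)$, the claim reduces to showing $[y, v]^2 \in R_2(H)$. For the defining generators $x_i$ themselves, the identity $[y,a,a] = [y,a]^{-2}$, valid for any involution $a$ (checked by a direct computation using $a^2=1$), combined with relation~(2), gives the sharper consequence $[y, x_i]^2 = 1$ outright in $H$.

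To propagate this from a single generator to an arbitrary $v$ with $v^2 \in R_2(H)$, I would write $v$ modulo $R_2(H)$ as a word in the $\bar{x}_j$'s, expand $[y, v]$ via repeated use of $[y, ab] = [y, b][y, a]^b$, and square. The principal obstacle is the control of the cross terms produced on squaring: each individual factor $[y, x_j]^{w_j}$ has order at most $2$, but non-commutativity of the factors forces one to show the residual nested commutators lie in $R_2(H)$. I expect this step to require the exponent-$4$ identity $(gh)^4 = 1$ in a suitably collected (Gupta--Newman style) form, combined with the facts that $R_2(H)$ is normal, $2$-Engel, and of nilpotency class at most three; only after this absorption is complete does $[y, v]^2 \in R_2(H)$ fall out, and with it condition (4).
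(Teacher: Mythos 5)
The non-solvability half of your argument is correct and coincides with the paper's: $R_2(H)$ is a $2$-Engel group, hence nilpotent of class at most $3$ and of derived length at most $2$, so solvability of $H/R_2(H)$ would force solvability of $H$ itself; the passage to the finite groups $\overline{H}_n$ via local finiteness is also fine.

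For condition (4), however, your argument stops exactly where the real difficulty begins. The identity $[y,v^2]=[y,v]^2[y,v,v]$ together with $[y,v^2]\in R_2(H)$ shows that $[y,v]^2\in R_2(H)$ is \emph{equivalent} to the goal $[y,v,v]\in R_2(H)$; it is a restatement, not a reduction. The observation $[y,x_i]^2=1$ rests on relation (2), which is special to the chosen generators, and there is no reason it should transfer to an arbitrary $v$ with $v^2\in R_2(H)$: such a $v$ is an arbitrary word in the $x_i$ with no controlled normal form, and the ``cross terms'' you propose to absorb are the entire content of the theorem. The missing step is precisely the paper's key Lemma: in any group $G$ of exponent $4$, $b^2\in R_2(G)$ implies $[a,b,b]\in R_2(G)$. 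The paper proves this not by expanding $b$ as a word in the generators but by localizing to the relatively free group $N$ on three generators $a,b,c$ subject to the laws $x^4=1$ and $[b^2,x,x]=1$, and then verifying the single identity $[a,b,b,c,c]=1$ there by a machine nilpotent-quotient computation; $N$ turns out to be nilpotent of class $7$ and of order $2^{41}$, which strongly suggests that no routine exponent-$4$ collection argument of the kind you sketch will close the gap by hand. Until $[y,v]^2\in R_2(H)$ (equivalently, the Lemma) is actually proved, condition (4) remains unestablished.
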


\section{\bf Proof of Theorem \ref{thm}}
The following is the key lemma of the paper.
\begin{lem}\label{lem}
Let $G$ be any group of exponent $4$ and $b\in G$ is such that $b^2\in R_2(G)$. Then $[a,b,b]\in R_2(G)$ for all $a\in G$.
This means that, in every group $G$ of exponent $4$,  every involution of the quotient $G/R_2(G)$ has an abelian normal closure.
\end{lem}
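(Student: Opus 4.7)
The plan is to proceed in three stages: a formal reduction, a geometric reformulation in $\overline{G} = G/R_{2}(G)$, and a combinatorial core that uses the exponent-$4$ relation. First, I would use the standard commutator identity
\[
[a, b^{2}] \;=\; [a, b]\cdot [a, b]^{b} \;=\; [a, b]^{2}\cdot [a, b, b],
\]
together with $[a, b^{2}]\in R_{2}(G)$ (which holds because $b^{2}\in R_{2}(G)$ and $R_{2}(G)$ is $G$-normal, so $[G, R_{2}(G)]\le R_{2}(G)$), to reduce the conclusion $[a, b, b]\in R_{2}(G)$ to the equivalent assertion $[a, b]^{2}\in R_{2}(G)$.

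Second, I would verify the geometric content of the lemma by passing to $\overline{G}$, in which $\overline{b}$ is an involution. Using $b^{x}=b\,[b, x]$ and $b^{4}=1$, a direct computation yields
\[
(bb^{x})^{2} \;=\; (b^{2}\,[b, x])^{2} \;=\; [b, x]^{2}\cdot [[b, x], b^{2}]^{[b, x]},
\]
and since $[[b, x], b^{2}]\in R_{2}(G)$ one obtains $(bb^{x})^{2}\equiv [b, x]^{2}\pmod{R_{2}(G)}$. Because $[u, v]=(uv)^{2}$ for involutions $u, v$, the condition $[b, x]^{2}\in R_{2}(G)$ for every $x\in G$ is equivalent to $\overline{b}$ commuting with each conjugate $\overline{b}^{\,\overline{x}}$, i.e., to $\langle\overline{b}\rangle^{\overline{G}}$ being abelian---this is the geometric interpretation advertised in the lemma.

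Third, it remains to prove $[b, x]^{2}\in R_{2}(G)$ in every exponent-$4$ group with $b^{2}\in R_{2}(G)$. By the companion identity $[b^{2}, x] = [b, x]^{2}\cdot [b, x, b]^{[b, x]}$ together with $[b^{2}, x]\in R_{2}(G)$, this reduces to showing $[b, x, b]\in R_{2}(G)$. Here I plan to exploit the exponent-$4$ relations $(bx)^{4}=1$ and $(xb)^{4}=1$ via Hall--Petresco-type commutator collection, using the fact that in any exponent-$4$ group $[y, x]^{2}$ lies in $\gamma_{3}(\langle x, y\rangle)$ (since the class-$2$ quotient forces $[y, x]^{6}=1$ and hence $[y, x]^{2}=1$ modulo $\gamma_{3}$). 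In the resulting expressions, every factor containing $b^{2}$ or a conjugate thereof is absorbed into $R_{2}(G)$, and the $x^{2}$-contributions appearing across the two expansions should pair up so as to cancel, leaving exactly $[b, x, b]\in R_{2}(G)$.

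The principal obstacle will be this third stage, since we have no hypothesis on $x^{2}$ and the bookkeeping required to track and cancel the $x^{2}$-contributions is delicate. Its successful execution is expected to use auxiliary identities like $[b^{2}, y^{2}]=[b^{2}, y]^{2}\in R_{2}(G)$ (an immediate consequence of $[b^{2}, y, y]=1$, giving $[b^{2}, y^{2}]=[b^{2}, y]\cdot [b^{2}, y]^{y}=[b^{2}, y]^{2}$) and the Hall--Witt identity to reshuffle weight-three commutators into the required form.
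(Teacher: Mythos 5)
Your first two stages are correct as far as they go, but they are also circular: since $R_2(G)$ is a normal subgroup (Kappe), and since $[b,a,b]=([a,b,b]^{-1})^{[a,b]^{-1}}$, the three conditions $[a,b,b]\in R_2(G)$, $[a,b]^2\in R_2(G)$ and $[b,a,b]\in R_2(G)$ are all equivalent to one another from the start. So the target of your third stage, $[b,x,b]\in R_2(G)$, is literally the statement you began with, merely rewritten; the reductions of stages one and three cancel each other out and nothing has been gained. Consequently the entire mathematical content of the lemma sits in the part of stage three that you do not carry out: you expand $(bx)^4=1$ and $(xb)^4=1$ by commutator collection and express the hope that ``the $x^2$-contributions should pair up so as to cancel.'' That is exactly the point where no hypothesis is available (nothing whatever is assumed about $x^2$), and you supply no identity or computation that actually produces the cancellation. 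As written, the proposal establishes only the easy equivalences and proves nothing about the lemma itself; this is a genuine gap, not a routine verification left to the reader.

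For comparison, the paper does not attempt a hand computation at all: it forms the freest group $N$ on generators $a,b,c$ subject to $x^4=1$ and $[b^2,x,x]=1$ for all $x\in N$, notes that $N$ is finite by Sanov's theorem, computes it with the nilpotent quotient algorithm in {\sf GAP} (it has order $2^{41}$ and nilpotency class $7$), and verifies that $[a,b,b,c,c]=1$ there; the lemma then follows for arbitrary $G$ by mapping $N$ onto the subgroup generated by the relevant elements. The size and class of $N$ are a strong indication that the ``delicate bookkeeping'' you defer is substantial, and that a short Hall--Witt manipulation is unlikely to suffice. To turn your outline into a proof you would need either to exhibit the collection identities explicitly and verify the cancellation, or to fall back on a machine verification of the kind the paper uses.
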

\begin{proof}
Let $N$ be the freest group generated by elements $a,b,c$ subject to the following relations:\\
(1) \; $x^4=1$ for all $x\in N$; \\
(2) \; $[b^2,x,x]=1$ for all $x\in N$.\\
By \cite{S} it is known that $N$ is finite. Now by  {\sf nq} package \cite{N} one can construct $N$ in {\sf GAP} \cite{GAP} by the following commands:
\begin{verbatim}
LoadPackage("nq");
F:=FreeGroup(4);
a:=F.1;b:=F.2;c:=F.3;x:=F.4;
G:=F/[x^4,LeftNormedComm([b^2,x,x])];
N:=NilpotentQuotient(G,[x]);
gen:=GeneratorsOfGroup(G,[x]);
LeftNormedComm([gen[1],gen[2],gen[2],gen[3],gen[3]]);
\end{verbatim}
Note that in above {\tt gen[1]}, {\tt gen[2]} and {\tt gen[3]} correspond to the free generators $a,b$ and $c$, respectively.
The output of last command in above (which is {\tt id} the trivial element of $N$) shows that $[a,b,b,c,c]=1$. This completes the proof.
\end{proof}
 It may be interesting in its own right that   the group $N$ defined in the proof of Lemma \ref{lem} is nilpotent of class $7$ and order $2^{41}$.\\

\noindent{\bf Proof of Theorem \ref{thm}.} It follows from Lemma  \ref{lem} that $\overline{H}=H/R_2(H)$ has the property (4) mentioned in the statement of Theorem \ref{thm}.
Since  $H$ is not solvable by \cite{GW} and \cite{R}, it follows that  there is no upper bound on the solvable lengths of finite $2$-groups $\overline{H}_n$.
By construction $\overline{H}_n$ is generated by involutions and by Lemma \ref{lem} all involutions in $\overline{H}_n$ are left $2$-Engel. This completes the proof. $\hfill\Box$ \\

\section*{\bf Acknowledgements}
This research was in part supported by a grant  (No. 92050219) from School of Mathematics, Institute for Research in Fundamental Sciences (IPM). The author gratefully acknowledges the financial support of the Center of Excellence for Mathematics, University of Isfahan.

\end{document}